\newcommand{\bC}{{\mathbb{C}}}
\newcommand{\bD}{{\mathbb{D}}}
\newcommand{\bM}{{\mathbb{M}}}
\newcommand{\bN}{{\mathbb{N}}}
  \newcommand{\A}{{\mathcal{A}}}
  \newcommand{\F}{{\mathcal{F}}}
  \newcommand{\G}{{\mathcal{G}}}
\renewcommand{\S}{{\mathcal{S}}}
  \newcommand{\V}{{\mathcal{V}}}
  \newcommand{\W}{{\mathcal{W}}}
\newcommand{\rC}{\mathrm{C}}
\newcommand{\eps}{\varepsilon}
\renewcommand{\phi}{\varphi}
\newcommand{\upchi}{{\raise.35ex\hbox{$\chi$}}}
\newcommand{\ol}{\overline}
\newcommand{\qand}{\quad\text{and}\quad}
\newcommand{\diag}{\operatorname{diag}}
\newtheorem{lemma}{Lemma}[section]
\newtheorem{theorem}[lemma]{Theorem}
\theoremstyle{definition}
\newtheorem{example}{Example}
\begin{document}
\author{Rapha\"el Clou\^atre}

\address{Department of Mathematics, University of Manitoba, Winnipeg, Manitoba, Canada R3T 2N2}

\email{raphael.clouatre@umanitoba.ca\vspace{-2ex}}
\thanks{The first author was partially supported by an NSERC Discovery Grant.}
\author{Diarra Mbacke}
\email{mbackmdb@myumanitoba.ca\vspace{-2ex}}
\subjclass[2010]{Primary 15A60, 47A30}

\begin{abstract}
An example due to Pisier shows that two commuting, completely polynomially bounded Hilbert space operators may not be simultaneously similar to contractions. Thus, while each operator is individually similar to a contraction, the pair is not \emph{jointly} similar to a pair of commuting contractions. We show that this phenomenon does not occur in finite dimensions. More precisely, we show that a finite family of power bounded commuting matrices is always jointly similar to a family of contractions. In fact, the result can be extended to infinite families satisfying certain uniformity conditions. Our approach is based on a joint spectral decomposition of the underlying space.
\end{abstract}

\title[Joint similarity for commuting matrices]{Joint similarity for commuting families\\ of power bounded matrices}
\maketitle

\section{Introduction}

The classical von Neumann inequality \cite{vN1951} states that for a contractive linear operator $T$ acting on a Hilbert space, we always have that
\[
\|f(T)\|\leq \sup_{z\in \bD}|f(z)|
\]
for every polynomial $f$, where $\bD$ denotes the open unit disc in the complex plane. This observation lies at the base of the fruitful connection between complex function theory and operator theory. It also provides motivation for one of Halmos' famous ten problems \cite{halmos1970}, essentially asking to characterize the class of \emph{polynomially bounded} operators, that is those Hilbert space operators for which von Neumann's inequality holds up to a multiplicative constant. It is readily seen that being similar to a contraction is a sufficient condition for an operator to be polynomially bounded, and Halmos asked whether this condition was in fact necessary. If the condition is weakened to the operator merely having uniformly bounded powers, then this was shown not to be the case by Foguel \cite{foguel1964}.

A key insight into Halmos' question was provided by Paulsen \cite{paulsen1984}, who showed that an operator is similar to a contraction if and only if it is \emph{completely} polynomially bounded, in the sense that it satisfies von Neumann's inequality up to a multiplicative constant for arbitrary matrix-valued polynomials. Such a characterization turned out to be very fruitful, and led to the solution of Halmos' problem by Pisier \cite{pisier1997}. Therein, an example is exhibited of a polynomially bounded operator which is not completely polynomially bounded. A somewhat streamlined treatment appears in \cite{davidsonpaulsen1997}. We refer the interested reader to \cite[Chapter 10]{paulsen2002}  or \cite[Chapter 28]{pisier2003} for a detailed account of this problem and its solution.

In this paper we explore the aforementioned problem in a multivariate context. This is motivated by \cite[Theorem 1]{pisier1998}, where it is shown that there exist two commuting bounded linear operators $S$ and $T$ on Hilbert space which are individually similar to contractions (equivalently, $S$ and $T$ are both completely polynomially bounded), yet there is no single invertible operator $Y$ with the property that $YSY^{-1}$ and $YTY^{-1}$ are both contractions. In fact, the product $ST$ is not even polynomially bounded, thus extending the result of \cite{petrovic1997}. This multivariate twist has received a fair amount of attention in various special cases \cite{FG2002},\cite{CJ2009},\cite{constantin2010}. Decisive results were obtained in \cite{popescu2014sim} for joint similarity to strict contractions.

We mention that this joint similarity problem can be recast in the setting of operator algebras by reformulating it in terms of a certain notion of length. In that language, \cite[Theorem 1]{pisier1998} says that the maximal tensor product of the familiar disc algebra with itself has infinite length. In contrast, it is shown in \cite{pisier1998} that the maximal tensor product of a nuclear $\rC^*$-algebra with any unital operator algebra turns out to have finite length; see also \cite{pisier2007} for related results.

The focus of our work here is more modest. We investigate the question of simultaneous similarity to contractions for commuting matrices. In this finite-dimensional setting, we show that the phenomenon exhibited in \cite[Theorem 1]{pisier1998} does not occur. 

We now describe the organization of the paper. Section \ref{S:prelim} gathers the necessary background and some preliminary tools that are used throughout. In Section \ref{S:decomp}, we establish the following spectral decomposition for commuting families of matrices with spectra in $\ol{\bD}$  (Theorem \ref{T:decompfamily}). This is  our main technical tool and may be of independent interest. For a matrix $T$, we denote by $\Delta(T)$ the subset of the spectrum consisting of those eigenvalues that appear in a block of size at least $2$ in the Jordan canonical form of $T$.
\begin{theorem}\label{T:maindecomp}
Let $\A$ be a commuting family of $n\times n$ matrices with spectra in $\ol{\bD}$. Then, there are finitely many non-zero subspaces $V_1,\ldots,V_{s}\subset \bC^n$ with the following properties:
\begin{enumerate}

\item[\rm{(a)}] for each $1\leq i\leq s$, the subspace $V_i$ is invariant for $\A$;

\item[\rm{(b)}] we have $V_i\cap \left( \sum_{j\neq i}V_j\right)=\{0\}$ for every $1\leq i\leq s$ and $\bC^n=\sum_{i=1}^s V_i$;

\item[\rm{(c)}]  for each $1\leq i\leq s$ and each $T\in \A$, either $\sigma(T|_{V_i})\subset \Delta(T)$ or there is $z\in \ol{\bD}$ such that $T|_{V_i}=z I_{V_i}$.

\end{enumerate}
\end{theorem}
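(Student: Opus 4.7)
The plan is to decompose $\bC^n$ using the idempotent structure of the finite-dimensional commutative algebra generated by $\A$. Let $\B \subseteq M_n(\bC)$ be the unital subalgebra generated by $\A$. Since $\B$ is commutative and Artinian (being finite-dimensional over $\bC$), the standard structure theorem furnishes pairwise orthogonal primitive idempotents $e_1,\ldots,e_s \in \B$ with $\sum_{i=1}^s e_i = I$ such that each corner $e_i\B$ is a local $\bC$-algebra with residue field $\bC$. Set $V_i := e_i\bC^n$; these subspaces are non-zero since the $e_i$ are.

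Properties (a) and (b) will be immediate from this construction. Indeed, $V_i$ is invariant for $\B$, and hence for $\A$, because $e_i$ commutes with every element of $\B$; orthogonality of the idempotents together with $\sum_i e_i = I$ yields the internal direct sum $\bC^n = \bigoplus_{i=1}^s V_i$, which is exactly the condition stated in (b).

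For (c), the locality of $e_i\B$ provides a character $\chi_i: \B \to \bC$ such that $T|_{V_i} - \chi_i(T) I_{V_i}$ is nilpotent for every $T \in \B$, so in particular $\sigma(T|_{V_i}) = \{\chi_i(T)\}$. Fix $T \in \A$ and set $z := \chi_i(T) \in \sigma(T) \subset \overline{\bD}$. If $z \in \Delta(T)$ then $\sigma(T|_{V_i}) \subset \Delta(T)$ and we are done. Otherwise $z \notin \Delta(T)$, which means every Jordan block of $T$ at $z$ has size one, and so $\ker(T-zI)^n = \ker(T-zI)$; combined with the nilpotency of $T|_{V_i} - zI_{V_i}$, this forces $V_i \subseteq \ker(T-zI)^n = \ker(T-zI)$, hence $T|_{V_i} = zI_{V_i}$.

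The conceptual point is that the joint generalized eigenspace decomposition of a commuting matrix family is encoded precisely by the primitive idempotents of the commutative Artinian algebra it generates, giving a simultaneous Jordan-type decomposition in a single step. The main obstacle is item (c): one must argue that joint-nilpotent behaviour of $T$ on $V_i$ at a value $z$ lying outside $\Delta(T)$ is precluded by the semisimplicity of $T$'s Jordan structure at $z$, collapsing the restriction to a scalar multiple of the identity. The spectral hypothesis $\sigma(T) \subset \overline{\bD}$ enters only to ensure that the scalar $z$ in the second alternative of (c) lies in $\overline{\bD}$.
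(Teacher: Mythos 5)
Your proof is correct, and it takes a genuinely different route from the paper. The paper proceeds by an elementary maximality argument: it first proves a single-matrix version of the decomposition (splitting off the eigenvalues outside $\Delta(T)$ via the Jordan form, with the pieces invariant under the commutant $\{T\}'$ by Sylvester's theorem), and then considers pairs $(\F,\V)$ of subfamilies and compatible decompositions, choosing one with $\V$ of maximal cardinality and showing that any failure of (c) would allow a strictly finer decomposition, a contradiction. Your argument instead forms the unital commutative algebra $\B$ generated by $\A$, invokes the structure theorem for commutative Artinian rings to produce the complete set of orthogonal primitive idempotents $e_1,\ldots,e_s$, and takes $V_i=e_i\bC^n$; the verification of (c) via the characters $\chi_i$ and the stabilization $\ker(T-zI)=\ker(T-zI)^n$ for $z\notin\Delta(T)$ is sound. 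Your approach is shorter and in fact delivers more than the theorem asks: each $V_i$ is a joint generalized eigenspace, so $\sigma(T|_{V_i})$ is a \emph{singleton} for every $T\in\B$ (not merely contained in $\Delta(T)$), and since any matrix commuting with $\A$ commutes with each $e_i\in\B$, the $V_i$ are invariant for the full commutant $\A'$, not just for $\A$. What the paper's argument buys in exchange is self-containedness: it uses only the Jordan canonical form and Sylvester's theorem, with no appeal to the structure theory of Artinian algebras, which fits the elementary linear-algebraic tone of the rest of the paper.
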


In Section \ref{S:jointsim}, this decomposition is used to establish our main results. The first one deals with finite families of commuting matrices (Theorem \ref{T:jointsimfinitepoly}).

\begin{theorem}\label{T:mainjointsimfinitepoly}
Let $T_1,\ldots,T_m$ be commuting power bounded matrices. Then, there exists an invertible matrix $Y$ with the property that $YT_k Y^{-1}$ is a contraction for every $1\leq k\leq m$. 
\end{theorem}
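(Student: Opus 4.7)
My plan is to apply Theorem \ref{T:maindecomp} to $\A=\{T_1,\ldots,T_m\}$ so as to reduce the problem to one invariant subspace at a time, then patch the local solutions together through the algebraic direct sum structure of Theorem \ref{T:maindecomp}(b). Since each $T_k$ is power bounded, we have $\sigma(T_k)\subset\ol{\bD}$, so the hypotheses of Theorem \ref{T:maindecomp} are met. Let $V_1,\ldots,V_s$ be the subspaces it provides.

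The essential preliminary observation is that power boundedness forces $\Delta(T_k)\subset\bD$: if an eigenvalue $\lambda$ with $|\lambda|\geq 1$ appeared in a Jordan block of size at least $2$, then the $(1,2)$-entry of the $n$th power of that block would be $n\lambda^{n-1}$, and $\|T_k^n\|$ would be unbounded. Combined with condition (c) of Theorem \ref{T:maindecomp}, this means that on each $V_i$ and for each $k$, either $T_k|_{V_i}=zI_{V_i}$ for some $z\in\ol{\bD}$, or $T_k|_{V_i}$ has spectral radius strictly less than $1$.

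The core step is to work on a single $V_i$. Since the matrices $T_1|_{V_i},\ldots,T_m|_{V_i}$ commute, I can choose a basis of $V_i$ in which they are all simultaneously upper triangular. In that basis, I introduce the diagonal similarity $D_\delta=\operatorname{diag}(1,1/\delta,\ldots,1/\delta^{\dim V_i-1})$, which multiplies the $(p,q)$-entry of each $T_k|_{V_i}$ by $\delta^{q-p}$. This leaves the diagonal unchanged and shrinks the strictly-upper-triangular entries as $\delta\to 0^+$. Scalar $T_k|_{V_i}$ are fixed by $D_\delta$ and are already contractions (since $|z|\leq 1$). Non-scalar $T_k|_{V_i}$ have diagonal entries of modulus strictly less than $1$, so the conjugated matrix tends to a strictly contractive diagonal matrix as $\delta\to 0^+$, and for $\delta$ small enough its operator norm is less than $1$. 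Since the family $\{T_k|_{V_i}:1\leq k\leq m\}$ is finite, one common $\delta_i>0$ works for all of them.

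To assemble, I use Theorem \ref{T:maindecomp}(b) to view $\bC^n$ as the algebraic direct sum $V_1\oplus\cdots\oplus V_s$ and define an inner product making the $V_i$ mutually orthogonal, each equipped with the inner product built above. Concretely, the invertible operator $Y$ on $\bC^n$ that acts on $V_i$ as the scaled basis change arising from $D_{\delta_i}$ satisfies $YT_kY^{-1}=\bigoplus_{i=1}^s(D_{\delta_i}T_k|_{V_i}D_{\delta_i}^{-1})$ as an orthogonal direct sum of contractions, hence a contraction for every $k$. The main obstacle, concentrated in the third paragraph, is that the scaling trick is only legal because Theorem \ref{T:maindecomp}(c) together with the Jordan-block consequence of power boundedness rules out diagonal entries of modulus one outside of the harmless scalar pieces; without that dichotomy, a non-scalar block with a unimodular eigenvalue would survive the scaling and prevent contractivity.
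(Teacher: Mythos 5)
Your proposal is correct and follows essentially the same route as the paper: reduce via the spectral decomposition of Theorem \ref{T:decompfamily}, use power boundedness to force $\Delta(T_k)\subset\bD$ (Lemma \ref{L:powerbounded}), apply the simultaneous triangularization plus diagonal scaling argument (Lemma \ref{L:opendisc}) on each $V_i$, and assemble through the algebraic direct sum. The only cosmetic difference is that you absorb the scalar blocks directly into the scaling step, whereas the paper peels them off by an induction in Lemma \ref{L:jointsimVi}; both handle the possibility of unimodular scalars in the same way.
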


In fact, we obtain a precise estimate on the size of the similarity $Y$ in the previous theorem. This information is then leveraged to extend the result to infinite families satisfying certain uniformity conditions (Theorem \ref{T:jointsiminfinite}). Roughly speaking, the family $\A$ should be uniformly power bounded, and there should be a uniform bound on the size of the similarity needed to put any given element of $\A$ in Jordan canonical form.

\begin{theorem}\label{T:mainjointsiminfinite}
Let $\A$ be a uniformly power bounded commuting family of matrices  with the uniform Jordan property. Then, there exists an invertible matrix $Y$ with the property that $YT Y^{-1}$ is a contraction for every $T\in \A$. 
\end{theorem}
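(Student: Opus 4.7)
The plan is to deduce Theorem \ref{T:mainjointsiminfinite} from the finite case Theorem \ref{T:mainjointsimfinitepoly} via a compactness argument, relying on the quantitative refinement alluded to in the paragraph preceding the statement. More precisely, I would first establish that the similarity $Y$ in Theorem \ref{T:mainjointsimfinitepoly} can be chosen with $\|Y\|\,\|Y^{-1}\| \leq B$, for some constant $B$ depending only on the fixed family $\A$---concretely, on its uniform power bound $M$, its uniform Jordan bound $K$, and the ambient dimension $n$---but not on the number of matrices selected from $\A$ to form the finite subfamily.

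Granted such an estimate, fix $B$ as above and (after a harmless rescaling) set
\[
\K = \bigl\{ Y \in M_n(\bC) : Y \text{ is invertible and } \max(\|Y\|,\|Y^{-1}\|) \leq \sqrt{B} \bigr\},
\]
which is compact in $M_n(\bC)$. Direct the collection of finite subsets $F \subset \A$ by inclusion. For each such $F$, the quantitative finite case supplies some $Y_F \in \K$ with $\|Y_F T Y_F^{-1}\| \leq 1$ for every $T \in F$. A compactness argument then extracts a subnet $Y_{F_\alpha} \to Y \in \K$. For any fixed $T \in \A$, one has $T \in F_\alpha$ eventually, so $\|Y_{F_\alpha} T Y_{F_\alpha}^{-1}\| \leq 1$ along the tail, and continuity of matrix multiplication and inversion on $\K$ yields $\|YTY^{-1}\| \leq 1$ in the limit, as desired.

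The principal difficulty is the uniform bound on $\|Y\|\,\|Y^{-1}\|$ in the finite case. The idea is to apply Theorem \ref{T:maindecomp} once and for all to the whole family $\A$, obtaining subspaces $V_1, \ldots, V_s$ whose geometry contributes a fixed constant to the size of the eventual similarity; property (c) then holds a fortiori for every element of any finite subfamily $F \subset \A$. On each $V_i$, the restrictions $T|_{V_i}$ for $T \in F$ are either scalars of modulus at most $1$, in which case they are already contractive, or have spectrum contained in $\Delta(T)$. In the latter case uniform power boundedness prevents those eigenvalues from accumulating on $\bT$: a Jordan block of size at least two with eigenvalue $\lambda$ has powers of order $N|\lambda|^{N-1}$, which cannot remain bounded by $M$ unless $|\lambda|$ stays away from $1$ in a manner quantifiable in $M$. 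Combined with the uniform Jordan property, this yields a uniform geometric decay $\|(T|_{V_i})^N\| \leq C\rho^N$ with $\rho < 1$ and $C$ depending only on $M$, $K$, and $n$. Such decay is precisely what is needed to manufacture a similarity on each $V_i$ with uniformly controlled condition number, and reassembling across the $V_i$ produces the desired global bound $B$.
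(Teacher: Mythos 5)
Your proposal is correct and follows essentially the same route as the paper: apply the spectral decomposition (Theorem \ref{T:maindecomp}) once to all of $\A$ to fix the constant coming from the subspaces $V_1,\dots,V_s$, use uniform power boundedness together with the uniform Jordan property to keep the eigenvalues in $\Delta(T)$ uniformly inside the open disc (the paper's Lemma \ref{L:unifDelta}), obtain the finite case with a similarity bound independent of the finite subfamily, and conclude by compactness of norm-bounded sets of invertible matrices via a convergent subnet. The only cosmetic difference is that you manufacture the similarity on each $V_i$ from uniform geometric decay of powers (Rota's construction), whereas the paper uses Schur triangularization plus a diagonal scaling; both give the required control on the condition number.
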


\textbf{Acknowledgements.} This project was initiated while the second author was conducting summer research under the supervision of the first author. The second author wishes to acknowledge the financial support of the Faculty of Science at the University of Manitoba through an Undergraduate Student Research Award.

\section{Background and preliminary results}\label{S:prelim}

 \subsection{Basic facts from linear algebra}
 
Given a positive integer $n$, we denote by $\bM_n$ the space of $n\times n$ complex matrices. We fix once and for all an orthonormal basis $\{e_1,\ldots,e_n\}$ of the Hilbert space $\bC^n$ and identify $\bM_n$ with the space of linear operators on $\bC^n$ in the usual fashion. Recall that if $\A\subset \bM_n$ is a subset and $V\subset \bC^n$ is a subspace, then $V$ is said to be \emph{invariant for $\A$} if $TV\subset V$ for every $T\in \A$. We write $\A'$ for the \emph{commutant} of $\A$, that is the collection of all matrices in $\bM_n$ commuting with every element of $\A$. The \emph{spectrum} of a matrix $T$ is the subset $\sigma(T)\subset \bC$ consisting of its eigenvalues. We will require the following basic rigidity property for matrices commuting with a given block diagonal matrix.

\begin{lemma}\label{L:commutant}
Let $A_1\in \bM_{n_1}, A_2\in \bM_{n_2},\ldots ,A_d \in \bM_{n_d}$ be matrices with pairwise disjoint spectra and let $T=A_1\oplus \ldots \oplus A_d$.  Let $S$ be a matrix that commutes with $T$. Then, $S=B_1\oplus B_2\oplus \ldots \oplus B_d$ where $B_i\in \bM_{n_i}$ commutes with $A_i$ for every $1\leq i\leq d$.
\end{lemma}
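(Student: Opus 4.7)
The plan is to approach this through the block decomposition of $S$ induced by the direct sum decomposition $\bC^n = \bC^{n_1} \oplus \cdots \oplus \bC^{n_d}$. Write $S = (S_{ij})_{1\le i,j\le d}$ with $S_{ij} \colon \bC^{n_j} \to \bC^{n_i}$, so that the equation $ST = TS$ reads block-by-block as
\[
A_i S_{ij} = S_{ij} A_j \qfor 1 \le i,j \le d.
\]
The goal reduces to showing $S_{ij} = 0$ whenever $i \neq j$, since then $B_i := S_{ii}$ automatically satisfies $A_i B_i = B_i A_i$ from the diagonal case $i = j$.

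To establish $S_{ij}=0$ for $i\neq j$, I would exploit the disjointness of the spectra via a polynomial functional calculus argument. Fix $i \neq j$. Since $\sigma(A_i)$ and $\sigma(A_j)$ are disjoint, the minimal polynomials $m_i$ of $A_i$ and $m_j$ of $A_j$ are coprime in $\bC[z]$. By the Chinese Remainder Theorem, there exists a polynomial $p \in \bC[z]$ satisfying $p \equiv 1 \pmod{m_i}$ and $p \equiv 0 \pmod{m_j}$, so that $p(A_i) = I_{n_i}$ and $p(A_j) = 0$. The intertwining relation $A_i S_{ij} = S_{ij} A_j$ iterates to $A_i^k S_{ij} = S_{ij} A_j^k$ for every $k \ge 0$, and hence $p(A_i) S_{ij} = S_{ij} p(A_j)$ by linearity. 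Substituting the values of $p(A_i)$ and $p(A_j)$ yields $S_{ij} = 0$.

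An equivalent route, which may be cleaner to present, is to invoke the holomorphic functional calculus: pick disjoint open neighborhoods $U_i\supset \sigma(A_i)$ and form the Riesz projections $P_i = \frac{1}{2\pi i}\int_{\partial U_i} (zI - T)^{-1}\,dz$. Each $P_i$ is precisely the projection of $\bC^n$ onto $\bC^{n_i}$ along the other summands, and $S$ commutes with every $P_i$ because it commutes with the resolvent of $T$. Hence each $\bC^{n_i} = \ran P_i$ is invariant under $S$, forcing the block diagonal form $S = B_1 \oplus \cdots \oplus B_d$, after which $ST = TS$ yields $B_i A_i = A_i B_i$.

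There is no real obstacle here: the whole content is the classical fact that a Sylvester operator $X \mapsto A_i X - X A_j$ is invertible when $\sigma(A_i) \cap \sigma(A_j) = \varnothing$, which both arguments above encode. I would favor the polynomial/CRT version because it stays entirely within elementary linear algebra and avoids any contour integration, matching the tone of the preliminaries section.
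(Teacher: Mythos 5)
Your proof is correct and follows essentially the same route as the paper: decompose $S$ into blocks relative to the direct sum, derive the intertwining relations $A_iS_{ij}=S_{ij}A_j$, and kill the off-diagonal blocks using disjointness of the spectra. The only difference is that the paper simply cites Sylvester's theorem for that last step, whereas you supply a self-contained proof of the needed special case via coprime minimal polynomials; both are fine.
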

\begin{proof}
Decomposing the matrix $S$ according to the block decomposition of $T$, we may write $S=[B_{ij}]_{i,j=1}^d$ for some rectangular matrices $B_{ij}\in \bM_{n_i\times n_j}$ such that $A_i B_{ij}=B_{ij} A_j$ for every $1\leq i,j\leq d$. By Sylvester's theorem \cite[Theorem 2.4.4.1]{HJ2013}, the assumption that the spectra of $A_i$ and $A_j$ are disjoint  shows that $B_{ij}=0$ whenever $i\neq j$. The proof is finished by defining $B_i=B_{ii}$ for every $1\leq i\leq d$.
\end{proof}

Some of our arguments will make crucial use of the Jordan structure of matrices, which we now recall. If $r$ is a positive integer and $\lambda$ is a complex number, we let $J_r(\lambda)$ denote the \emph{Jordan block} of size $r$ with eigenvalue $\lambda$, that is
\[
J_r(\lambda)=\begin{bmatrix}

\lambda & 1 & & & & &\\
 & \lambda & 1 & & &\\
 & & \lambda & 1& &  \\
 & & & \ddots & \ddots\\
 & & & & \lambda & 1\\
 & & & & & \lambda
\end{bmatrix}\in \bM_r
\]
where the unspecified entries are zero. Given an arbitrary matrix $T\in \bM_n$,  there is an invertible matrix $X\in \bM_n$ such that $XTX^{-1}$ is in \emph{Jordan canonical form}, in the sense that there are positive integers $r_1,\ldots,r_d$ along with complex numbers $\lambda_1, \ldots,\lambda_d\in \sigma(T)$ such that
\[
XTX^{-1}=J_{r_1}(\lambda_1)\oplus \ldots \oplus J_{r_d}(\lambda_d).
\]
This form is unique up to a permutation of the Jordan blocks \cite[Theorem 3.1.11]{HJ2013}. We let $\Delta(T)\subset \sigma(T)$ denote the subset consisting of those $\lambda\in \sigma(T)$ for which the Jordan canonical form of $T$ contains a block of the form $J_r(\lambda)$ for some $r\geq 2$. Equivalently, an eigenvalue $\lambda$ lies in $\Delta(T)$ if and only if 
\[
\ker(T-\lambda I)\neq \ker (T-\lambda I)^2.
\]
An elementary property that we will require is the following.

\begin{lemma}\label{L:Deltarest}
Let $T\in \bM_n$ be a matrix. Assume that there are subspaces $V,W\subset \bC^n$ which are invariant for $T$ such that $V\cap W=\{0\}$ and $\bC^n=V+W$. Then, 
\[
\Delta(T|_V)\cup \Delta(T|_W)=\Delta(T).
\]
\end{lemma}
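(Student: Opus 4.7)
The strategy is to prove both inclusions directly using the characterization of $\Delta(T)$ in terms of generalized eigenspaces, namely that $\lambda\in\Delta(T)$ if and only if $\ker(T-\lambda I)\subsetneq\ker(T-\lambda I)^2$, which was noted just before the lemma. Since $V$ and $W$ are each invariant for $T$, both are invariant for $(T-\lambda I)^k$ for every $k$, and the hypothesis $V\cap W=\{0\}$ together with $\bC^n=V+W$ means that every $x\in\bC^n$ admits a \emph{unique} decomposition $x=v+w$ with $v\in V$ and $w\in W$.

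For the inclusion $\Delta(T|_V)\cup\Delta(T|_W)\subseteq\Delta(T)$, I would simply observe that a witnessing vector $v\in V$ with $(T|_V-\lambda I_V)v\neq 0$ but $(T|_V-\lambda I_V)^2 v=0$ serves equally well as a witness inside $\bC^n$, showing $\lambda\in\Delta(T)$; the same applies to $W$.

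For the reverse inclusion $\Delta(T)\subseteq\Delta(T|_V)\cup\Delta(T|_W)$, suppose $\lambda\in\Delta(T)$ and pick $x\in\bC^n$ with $(T-\lambda I)x\neq 0$ and $(T-\lambda I)^2 x=0$. Write $x=v+w$ with $v\in V,\,w\in W$. By invariance, $(T-\lambda I)^2 v\in V$ and $(T-\lambda I)^2 w\in W$, and their sum is $0$. Since $V\cap W=\{0\}$, uniqueness of the decomposition forces $(T-\lambda I)^2 v=0$ and $(T-\lambda I)^2 w=0$. Similarly $(T-\lambda I)v\in V$ and $(T-\lambda I)w\in W$, and their sum is $(T-\lambda I)x\neq 0$, so at least one of them is nonzero. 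Whichever one is nonzero then certifies that $\lambda\in\Delta(T|_V)$ or $\lambda\in\Delta(T|_W)$.

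There is no real obstacle here; the lemma is essentially a bookkeeping exercise once one recognizes that the kernel characterization of $\Delta$ respects direct-sum decompositions into invariant subspaces. The only point requiring a small amount of care is the uniqueness of the decomposition $x=v+w$, which is exactly what lets one separate the conditions $(T-\lambda I)^2 x=0$ and $(T-\lambda I)x\neq 0$ into their $V$- and $W$-components.
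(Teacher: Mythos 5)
Your proof is correct, but it takes a different route from the paper. The paper's argument is structural: it builds the invertible map $X(v+w)=(v,w)$, notes that $XTX^{-1}=T|_V\oplus T|_W$, and then invokes the uniqueness of the Jordan canonical form to conclude that the Jordan form of $T$ is, up to permutation of blocks, $J_V\oplus J_W$, from which the equality of the $\Delta$-sets is immediate by the block-size definition of $\Delta$. You instead work entirely with the kernel characterization $\lambda\in\Delta(T)\iff\ker(T-\lambda I)\subsetneq\ker(T-\lambda I)^2$ and split a witnessing vector along the (unique) decomposition $x=v+w$; the key observation that $(T-\lambda I)^2v$ and $(T-\lambda I)^2w$ lie in $V$ and $W$ respectively and hence must each vanish is exactly right, and the forward inclusion via transporting a witness from $V$ into $\bC^n$ is also fine. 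Your approach is more elementary --- it avoids appealing to the uniqueness theorem for Jordan forms entirely and would generalize verbatim to a decomposition into more than two invariant summands --- at the cost of being slightly longer; the paper's approach is shorter given that the Jordan machinery is already set up and in active use elsewhere in the paper.
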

\begin{proof}
Define $X:\bC^n\to V\oplus W$ as $X(v+w)=(v,w)$ for every $v\in V,w\in W.$ By assumption, we see that $X$ is a well-defined invertible linear  operator and that $XTX^{-1}=T|_V\oplus T|_W$. Denote by $J, J_V$ and $J_W$ the Jordan canonical forms of $T, T|_V$ and $T|_W$ respectively. By uniqueness of the Jordan canonical form  we see that $J$ is unitarily equivalent to $J_V\oplus J_W$, and thus
\[
\Delta(T|_V)\cup \Delta(T|_W)=\Delta(T).
\]
\end{proof}

We define the norm of a matrix $T\in \bM_n$ to be that of the associated linear operator on the finite-dimensional Hilbert space $\bC^n$, so that
\[
\|T\|=\max\{\|Tv\|:v\in \bC^n, \|v\|=1\}.
\]
We say that $T$ is a \emph{contraction} if $\|T\|\leq 1$. If $T=[t_{ij}]_{i,j=1}^n$, then
\[
\max\{|t_{ij}|:1\leq i,j\leq n\}\leq \|T\|\leq n^2 \max\{|t_{ij}|:1\leq i,j\leq n\}.
\]
Moreover, if $T=\diag(t_1,\ldots,t_n)$, then 
\[
\|T\|=\max\{|t_j|:1\leq j\leq n\}.
\]
More generally, if there are matrices $T_j\in \bM_{n_j},1\leq j\leq d$ such that 
\[
T=T_1\oplus T_2\oplus \ldots\oplus T_d,
\]
then 
\[
\|T\|=\max\{\|T_j\|:1\leq j\leq d\}.
\]
We say that $T$ is \emph{power bounded} if there is a constant $K>0$ with the property that $\|T^p\|\leq K$ for every $p\in \bN$. To emphasize the constant $K$, we sometimes also say that $T$ is \emph{power bounded with constant $K$}. A collection of matrices $\A$ will be said to be \emph{uniformly power bounded} if there is $K>0$ such that every $T\in \A$ is power bounded with constant $K$.

Our next task is to establish  a useful fact about the spectrum of a power bounded matrix. To facilitate this, we introduce the following notation:
\[
\delta(T)=\min_{\lambda\in \Delta(T)}\|(T-\lambda I)|_{\ker(T-\lambda I)^2}\|.
\]
We note that $\delta(T)>0$ by definition of $\Delta(T)$. Throughout the paper we denote by $\bD_r\subset \bC$ the open disc of radius $r>0$ centred at the origin. The topological closure of $\bD_r$ is denoted by $\ol{\bD_r}$. When $r=1$, we simply write $\bD$ instead of $\bD_1$. For convenience, we adopt the convention that the maximum of the empty set is simply $0$.

\begin{lemma}\label{L:powerbounded}
Let $T\in \bM_n$ be a matrix which is power bounded with constant $K>0$.
Then, we have that $\sigma(T)\subset \ol{\bD}$ and
\[
\max_{\lambda\in \Delta(T)}\sup_{p\in \bN }\{p|\lambda|^{p-1}\}\leq \frac{K}{\delta(T)}.
\]
In particular, we have $\Delta(T)\subset \bD$.
\end{lemma}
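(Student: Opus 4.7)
The first claim, $\sigma(T)\subset\ol{\bD}$, is standard: any eigenvalue $\mu$ of $T$ satisfies $\|T^p v\|=|\mu|^p \|v\|$ for a corresponding eigenvector $v$, so power boundedness forces $|\mu|^p\leq K$ for all $p$, giving $|\mu|\leq 1$. (Alternatively, invoke the spectral radius formula.)

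For the quantitative bound on $\Delta(T)$, I plan to exploit the fact that on the generalized eigenspace $M_\lambda:=\ker(T-\lambda I)^2$, the operator $T-\lambda I$ is nilpotent of order at most $2$. Fix $\lambda\in\Delta(T)$ and put $N_\lambda:=\ker(T-\lambda I)\subseteq M_\lambda$ and $S_\lambda:=(T-\lambda I)|_{M_\lambda}$. Since $S_\lambda^2=0$, the binomial expansion collapses: for any $w\in M_\lambda$,
\[
T^p w=\bigl(\lambda I+S_\lambda\bigr)^p w=\lambda^p w+p\lambda^{p-1}S_\lambda w.
\]
Because $S_\lambda$ vanishes on $N_\lambda$, the norm $\|S_\lambda\|$ is attained on the orthogonal complement $M_\lambda\ominus N_\lambda$; so I choose a unit vector $w\in M_\lambda\ominus N_\lambda$ with $\|S_\lambda w\|=\|S_\lambda\|$, and note that $u:=S_\lambda w$ lies in $N_\lambda$.

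The key trick is to isolate the $p\lambda^{p-1}$ term by applying the orthogonal projection $P_{N_\lambda}$ onto $N_\lambda$. Since $w\perp N_\lambda$ and $u\in N_\lambda$, we obtain
\[
P_{N_\lambda}T^p w=p\lambda^{p-1}u,
\]
so
\[
p|\lambda|^{p-1}\|S_\lambda\|=\|P_{N_\lambda}T^p w\|\leq \|T^p w\|\leq \|T^p\|\leq K.
\]
Dividing by $\|S_\lambda\|\geq\delta(T)$ yields $p|\lambda|^{p-1}\leq K/\delta(T)$; taking the maximum over $\lambda\in\Delta(T)$ and the supremum over $p\in\bN$ gives the asserted inequality. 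The final conclusion $\Delta(T)\subset\bD$ is then immediate: if some $\lambda\in\Delta(T)$ had $|\lambda|=1$, then $\sup_p p|\lambda|^{p-1}=\sup_p p=\infty$, contradicting the finite bound $K/\delta(T)$.

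The only delicate point is justifying that $\|S_\lambda\|$ is realized on $M_\lambda\ominus N_\lambda$, which is what allows the projection step to cleanly separate the two terms in the expansion of $T^p w$; once this geometric observation is in place, the rest is a one-line estimate using power boundedness.
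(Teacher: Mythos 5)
Your proof is correct and follows essentially the same route as the paper: restrict to $\ker(T-\lambda I)^2$, use that $(T-\lambda I)$ is $2$-nilpotent there so that $T^p$ produces the term $p\lambda^{p-1}(T-\lambda I)$, and bound that term by $K$ via power boundedness. The only cosmetic difference is that you isolate this term with the projection onto $\ker(T-\lambda I)$ and a norm-attaining vector, whereas the paper reads it off as the $(1,2)$-entry of a $2\times 2$ block decomposition; your "delicate point" about the norm being attained on $M_\lambda\ominus N_\lambda$ is correctly justified since $S_\lambda$ kills $N_\lambda$.
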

\begin{proof}
By assumption, we have $\|T^p\|\leq K$ for every $p\in \bN$. Let $\lambda\in \sigma(T)$ and consider $V=\ker(T-\lambda I)$. Then, $V$ is invariant for $T$ and $T^p|_V=\lambda^p I$ whence 
\[
|\lambda|^p=\|T^p|_V\|\leq \|T^p\|\leq K
\]
for every $p\in \bN$. We infer $|\lambda|\leq 1$. This shows that $\sigma(T)\subset \ol{\bD}$. 

Next, assume that $\lambda\in \Delta(T)$ and let $W=\ker(T-\lambda I)^2$. Then, $W$ is invariant for $T$ and we set $R=T|_W$. Note that $(R-\lambda I)^2=0$ whence 
\[
(R-\lambda I)W\subset \ker (R-\lambda I).
\]
According to the decomposition
\[
W=(R-\lambda I)W\oplus ((R-\lambda I)W)^\perp
\]
we may write
\[
R=\begin{bmatrix}
\lambda & S\\
0 & \lambda
\end{bmatrix}
\]
for some linear operator $S$ satisfying 
\[
\|S\|=\|R-\lambda I\|=\|(T-\lambda I)|_{\ker (T-\lambda I)^2}\|\geq \delta(T).
\]
For each $p\in \bN$, it is easily verified that the $(1,2)$-entry of $R^p$ is $p \lambda^{p-1}S$, and therefore we find
\[
p|\lambda|^{p-1}\|S\|\leq \|R^p\|\leq \|T^p\|\leq K
\]
and
\[
p|\lambda|^{p-1}\leq\frac{K}{\|S\|}\leq \frac{K}{\delta(T)}.
\]
In particular, $\lim_{p\to\infty}|\lambda|^p=0$ for every $\lambda\in \Delta(T)$, which implies that $\Delta(T)\subset \bD$.
\end{proof}

 \subsection{Matrices similar to contractions}
 Our main focus in the paper will be the similarity of certain matrices to contractions. We record here a particularly simple case of a classical theorem of Rota \cite{rota1960} (a multivariate generalization can be found in \cite{popescu2014sim}). We provide an elementary proof.
 
\begin{lemma}\label{L:opendisc}
Let $T_1,\ldots,T_m\in \bM_n$ be commuting matrices. Assume that there are constants $K>1$ and $0<r<1$ such that for every $1\leq k\leq m$, we have $\sigma(T_k)\subset \ol{\bD_r}$ and $\|T_k\|\leq K$. Then, there is an invertible matrix $Y\in \bM_n$ such that $Y T_k Y^{-1}$ is a contraction for every $1\leq k\leq m$ and with the property that 
\[
\|Y\|= \|Y^{-1}\|\leq \left( \frac{n^2 K}{1-r}\right)^{\frac{n-1}{2}}.
\]
\end{lemma}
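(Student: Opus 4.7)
The plan is to combine simultaneous upper triangularization with a diagonal similarity that shrinks the off-diagonal entries, then rescale to balance $\|Y\|$ against $\|Y^{-1}\|$.

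First I would invoke simultaneous Schur triangularization for commuting matrices: since $T_1,\ldots,T_m$ commute, there exists a unitary $U\in\bM_n$ such that each $A_k := U T_k U^*$ is upper triangular. The diagonal entries of $A_k$ are then the eigenvalues of $T_k$, each of modulus at most $r$, while by unitarity the whole matrix still satisfies $\|A_k\|\leq K$, so in particular every entry of $A_k$ has modulus at most $K$.

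Next I would conjugate by a diagonal matrix to make the off-diagonal entries small. Set $t = \tfrac{n^2 K}{1-r}$ (note $t>1$ because $K>1$ and $0<r<1$) and put $D_0 = \diag(1,t,t^2,\ldots,t^{n-1})$. For any upper triangular $A=[a_{ij}]$, the entries of $B := D_0 A D_0^{-1}$ are $b_{ij} = t^{i-j} a_{ij}$, which equals $a_{ii}$ on the diagonal and has modulus at most $t^{-1}K$ above it. Splitting $B = \Lambda + N$ into its diagonal $\Lambda$ and strictly upper triangular part $N$, the diagonal-matrix norm formula from the preliminaries gives $\|\Lambda\|\leq r$, while the crude bound $\|N\|\leq n^2 \max_{ij}|n_{ij}|\leq n^2 t^{-1}K$ yields
\[
\|B\|\leq r + \frac{n^2 K}{t} = r + (1-r) = 1.
\]
Applied to each $A_k$, this shows that $D_0 U T_k U^* D_0^{-1}$ is a contraction for every $1\leq k\leq m$.

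Finally, to obtain the advertised symmetric bound on the similarity, I would rescale: set
\[
Y = t^{-(n-1)/2} D_0 U.
\]
This scalar factor does not affect the conjugation, so $YT_k Y^{-1}$ remains a contraction. Since $U$ is unitary and $\|D_0\| = t^{n-1}$, while $\|D_0^{-1}\| = 1$ (as $t\geq 1$), one computes
\[
\|Y\| = t^{-(n-1)/2}\,t^{n-1} = t^{(n-1)/2}, \qquad \|Y^{-1}\| = t^{(n-1)/2}\cdot 1 = t^{(n-1)/2},
\]
which is exactly $\bigl(n^2 K/(1-r)\bigr)^{(n-1)/2}$.

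There is no serious obstacle here: simultaneous triangularization is the key structural input, and everything else is a direct calculation. The only thing to watch is the choice of scaling factor to match the stated symmetric bound on $\|Y\|$ and $\|Y^{-1}\|$, and the use of the (somewhat wasteful) entrywise bound $\|N\|\leq n^2\max|n_{ij}|$ already recorded in Section~\ref{S:prelim}, which dictates the precise constant $t = n^2 K/(1-r)$.
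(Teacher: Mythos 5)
Your proposal is correct and follows essentially the same route as the paper: simultaneous unitary upper triangularization, conjugation by the diagonal matrix $\diag(1,t,\ldots,t^{n-1})$ with $t=n^2K/(1-r)$ (the paper writes this as $\eps^{-1}$), the entrywise bound $\|N\|\leq n^2\max_{i,j}|n_{ij}|$, and a final scalar rescaling to equalize $\|Y\|$ and $\|Y^{-1}\|$. All the estimates check out, so there is nothing to add.
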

\begin{proof}
Since the family $\{T_1,\ldots,T_m\}$ is commuting,  by virtue of \cite[Theorem 2.4.8.7]{HJ2013}, there is a unitary matrix $U\in \bM_n$ with the property that $UT_k U^{-1}$ is upper triangular for every $1\leq k\leq m$. Thus, for each $1\leq k\leq m$ there are complex numbers $t^{(k)}_{ij} \in \bC, 1\leq i<j\leq n$ and $\tau^{(k)}_j\in \bC ,1 \leq j\leq n$ such that
\[
U T_k U^{-1}=\begin{bmatrix}
\tau^{(k)}_1 & t^{(k)}_{12} & t^{(k)}_{13} & t^{(k)}_{14}&  \cdots & t^{(k)}_{1n}\\
 & \tau^{(k)}_2 & t^{(k)}_{23} & t^{(k)}_{24} & \cdots & t^{(k)}_{2n}\\
 & &  \tau^{(k)}_3 & t^{(k)}_{34} & \cdots & t^{(k)}_{3n}\\
 &  &   &  \ddots&  \ddots& \vdots \\
  &  &  &  &  \tau^{(k)}_{n-1}  & t_{n-1,n}^{(k)}\\
   &  &  &  &  & \tau^{(k)}_n
\end{bmatrix}
\] 
where the unspecified entries are zero. By assumption on $T_k$, for every $1\leq k\leq m$ we see that $|\tau^{(k)}_j|\leq r$ for every $1\leq j\leq n$ and $|t_{ij}^{(k)}|\leq K$ for every $1\leq i<j\leq n$. Let
\[
\eps=\frac{1-r}{n^2K}
\]
and define
\[
X=\diag(1,\eps^{-1}, \eps^{-2},\ldots,\eps^{-(n-1)})\in \bM_n.
\]
We see that $\|X^{-1}\|\leq 1$ and 
\[
\|X\|= \eps^{-(n-1)}=\left( \frac{n^2 K}{1-r}\right)^{n-1}.
\]
For convenience, for each $1\leq k \leq m$ we set
\[
R_k=\begin{bmatrix}
0 & \eps t^{(k)}_{12} & \eps^2 t^{(k)}_{13} & \eps^ 3 t^{(k)}_{14}&  \cdots & \eps^{n-1}t^{(k)}_{1n}\\
 & 0& \eps t^{(k)}_{23} & \eps^2  t^{(k)}_{24} & \cdots & \eps^{n-2} t^{(k)}_{2n}\\
 & &  0 & \eps t^{(k)}_{34} & \cdots & \eps^{n-3} t^{(k)}_{3n}\\
 & &   & \ddots &  \ddots & \vdots\\
   &  &  &  & 0  & \eps t_{n-1,n}^{(k)}\\
 &  &  &  & &0
\end{bmatrix}
\]
where the unspecified entries are zero. A routine calculation now yields
\[
XU T_k U^{-1}X^{-1}=\diag(\tau^{(k)}_1,\tau^{(k)}_2,\ldots,\tau^{(k)}_n)+R_k
\] 
so we infer that
\begin{align*}
\|XU T_k U^{-1}X^{-1}\|&\leq \|\diag(\tau^{(k)}_1,\tau^{(k)}_2,\ldots,\tau^{(k)}_n)\|+\|R_k\|\\
&\leq \max\{|\tau^{(k)}_j|:1\leq j\leq n\}+n^2 \max\{ \eps^{j-i} |t^{(k)}_{ij}|:1\leq i<j\leq n\}\\
&\leq r+n^2 K\eps=1
\end{align*}
for every $1\leq k\leq m$. Therefore, the proof is complete upon setting 
\[
Y=\left(\frac{\|X^{-1}\|}{\|X\|}\right)^{1/2}XU.
\]
\end{proof}

\section{The spectral decomposition}\label{S:decomp}
This section contains the brunt of the technical work underlying our main arguments. Our goal is to show that the space $\bC^n$ can be decomposed as a direct sum in a manner that is compatible with the spectral properties of a given commuting family of matrices. Such a decomposition will then allow us to leverage Lemmas  \ref{L:powerbounded} and \ref{L:opendisc}.

The first step in achieving the desired spectral decomposition is the following.
Given a vector space $V$, we denote by $I_V$ the identity operator on it.

\begin{lemma}\label{L:decomppb=1}
Let $T\in \bM_n$ be a matrix with spectrum in $\ol{\bD}$. Then, there are finitely many non-zero subspaces $V_1,\ldots,V_{s}\subset \bC^n$ with the following properties:
\begin{enumerate}

\item[\rm{(a)}] for each $1\leq i\leq s$, the subspace $V_i$ is invariant for $\{T\}'$;

\item[\rm{(b)}] we have $V_i\cap \left( \sum_{j\neq i}V_j\right)=\{0\}$ for every $1\leq i\leq s$ and $\bC^n=\sum_{i=1}^s V_i$;
\item[\rm{(c)}]  for each $1\leq i\leq s$, either $\sigma(T|_{V_i})= \Delta(T)$ or there is $z\in \ol{\bD}$ such that $T|_{V_i}=z I_{V_i}$.
\end{enumerate}
\end{lemma}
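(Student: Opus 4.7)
\medskip
\noindent\textbf{Plan.} My approach is to use the classical primary (generalized eigenspace) decomposition of $\bC^n$ with respect to $T$, bundling together those generalized eigenspaces whose eigenvalue lies in $\Delta(T)$. For each $\lambda \in \sigma(T)$, let $E_\lambda = \ker(T-\lambda I)^n$ denote the corresponding generalized eigenspace, so that
\[
\bC^n = \bigoplus_{\lambda \in \sigma(T)} E_\lambda.
\]
A key observation is that each $E_\lambda$ is in fact $\{T\}'$-invariant: any $S$ commuting with $T$ also commutes with $(T-\lambda I)^n$, and hence preserves its kernel.

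I would then set
\[
V_\sharp = \sum_{\lambda \in \Delta(T)} E_\lambda \qand V_\mu = E_\mu \quad\text{for } \mu \in \sigma(T) \setminus \Delta(T),
\]
and take $V_1, \ldots, V_s$ to be an enumeration of the non-zero subspaces among $\{V_\sharp\} \cup \{V_\mu : \mu \in \sigma(T) \setminus \Delta(T)\}$. Property (b) then follows immediately from the primary decomposition, while (a) follows from the $\{T\}'$-invariance of each $E_\lambda$ together with the fact that sums of $\{T\}'$-invariant subspaces remain $\{T\}'$-invariant.

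For (c), when we look at $V_\sharp$ we have $\sigma(T|_{E_\lambda}) = \{\lambda\}$ for each $\lambda \in \Delta(T)$, and combining these gives $\sigma(T|_{V_\sharp}) = \Delta(T)$. When we consider $V_\mu$ with $\mu \notin \Delta(T)$, the equivalent characterization recorded in Section \ref{S:prelim} gives $\ker(T-\mu I) = \ker(T-\mu I)^2$. A routine induction (using that $N(\ker N^{k+1}) \subset \ker N^k$ with $N = T - \mu I$) then upgrades this to $\ker(T-\mu I)^k = \ker(T-\mu I)$ for all $k \geq 1$. Hence $E_\mu = \ker(T-\mu I)$, so that $T|_{V_\mu} = \mu I_{V_\mu}$, and the inclusion $\mu \in \sigma(T) \subset \ol{\bD}$ supplies the required scalar $z$.

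The argument is largely a bookkeeping exercise once the primary decomposition is in hand, so I do not anticipate any substantive obstacle. The only point that needs modest care is the handling of the degenerate situations in which either $\Delta(T)$ or $\sigma(T) \setminus \Delta(T)$ is empty, which we accommodate by simply omitting the corresponding empty subspace from the enumeration of $V_1, \ldots, V_s$.
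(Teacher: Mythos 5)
Your proof is correct, and it reaches the same decomposition as the paper by a somewhat more direct route. The subspaces you produce --- the sum of the generalized eigenspaces $E_\lambda=\ker(T-\lambda I)^n$ over $\lambda\in\Delta(T)$, together with the individual $E_\mu$ for $\mu\in\sigma(T)\setminus\Delta(T)$ --- coincide with the paper's $V_i$. The difference is in how the $\{T\}'$-invariance (property (a)) is obtained: the paper passes to the Jordan canonical form $J=X^{-1}TX$, invokes its commutant rigidity lemma (Lemma \ref{L:commutant}, resting on Sylvester's theorem) to see that the blocks of $J$ with pairwise disjoint spectra are invariant for $\{J\}'$, and then transports back via $X$; you instead observe that any $S$ commuting with $T$ commutes with $(T-\lambda I)^n$ and hence preserves its kernel, which gives the invariance intrinsically and dispenses with both the similarity and the appeal to Sylvester. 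Your verification of (c) for $\mu\notin\Delta(T)$ --- bootstrapping $\ker(T-\mu I)=\ker(T-\mu I)^2$ to $\ker(T-\mu I)=\ker(T-\mu I)^k$ for all $k$, so that $E_\mu=\ker(T-\mu I)$ and $T|_{E_\mu}=\mu I_{E_\mu}$ --- is also sound, and replaces the paper's reading of this fact off the Jordan form. What the paper's route buys is consistency with the Jordan-block bookkeeping used elsewhere (e.g.\ in Lemma \ref{L:powerbounded} and the uniform Jordan property); what yours buys is a shorter, basis-free argument. Your handling of the degenerate cases ($\Delta(T)=\varnothing$ or $\sigma(T)=\Delta(T)$) by omitting empty terms is fine and mirrors the paper's case split.
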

\begin{proof}
Choose an invertible matrix $X\in \bM_n$ such that $J=X^{-1}TX$, where $J$ denotes the Jordan canonical form of $T$. 

Assume first that $\Delta(T)$ is empty.  By definition of $\Delta(T)$, we see that there are finitely many non-zero subspaces $W_1,\ldots,W_s\subset \bC^n$ that are invariant for $J$ and such that $\bC^n=\oplus_{i=1}^s W_i$, along with distinct complex numbers $z_1,\ldots,z_s\in \ol{\bD}$ such that  $J|_{W_i}=z_i I_{W_i}$ for every $1\leq i\leq s$. It is a consequence of Lemma \ref{L:commutant} that the subspaces $W_1,\ldots,W_s$ are in fact invariant for $\{J\}'$. Let $V_i=XW_i$ for every $1\leq i\leq s$. Then, $V_1,\ldots,V_s$ are invariant for $\{T\}'$ and have all the desired properties, so the proof is complete in this case.

If $\sigma(T)= \Delta(T)$ then the desired conclusion trivially holds with $V_1=\bC^n$.  The remaining case is that when $\Delta(T)$ and $\sigma(T)\setminus \Delta(T)$ are both non-empty. In this case, there are non-zero invariant subspaces $W_1,\ldots,W_s\subset \bC^n$ for $J$ such that  $\sigma(J|_{W_1})=\Delta(T)$, while there are distinct complex numbers $z_2,\ldots,z_s\in \sigma(T)\setminus \Delta(T)$ such that $J|_{W_i}=z_i I_{W_i}$ for every $2\leq i\leq s$. As above, it follows from Lemma \ref{L:commutant} that the subspaces $W_1,\ldots,W_s$ are in fact invariant for $\{J\}'$. For each $1\leq i\leq s$, we let $V_i=XW_i$. Then, the subspaces $V_1,\ldots,V_s$ have all the desired properties and the conclusion is established in this case as well.
\end{proof}

We now arrive at our main technical tool, which extends the spectral decomposition of the previous lemma to arbitrary commuting families.

\begin{theorem}\label{T:decompfamily}
Let $\A\subset \bM_n$ be a commuting family of matrices with spectra in $\ol{\bD}$. Then, there are finitely many non-zero subspaces $V_1,\ldots,V_{s}\subset \bC^n$ with the following properties:
\begin{enumerate}

\item[\rm{(a)}] for each $1\leq i\leq s$, the subspace $V_i$ is invariant for $\A$;

\item[\rm{(b)}] we have $V_i\cap \left( \sum_{j\neq i}V_j\right)=\{0\}$ for every $1\leq i\leq s$ and $\bC^n=\sum_{i=1}^s V_i$;

\item[\rm{(c)}]  for each $1\leq i\leq s$ and each $T\in \A$, either $\sigma(T|_{V_i})\subset \Delta(T)$ or there is $z\in \ol{\bD}$ such that $T|_{V_i}=z I_{V_i}$.

\end{enumerate}
\end{theorem}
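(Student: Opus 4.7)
The plan is to proceed by induction on the dimension $n$. The case $n=1$ is immediate: every $T\in\A$ is then a scalar, so taking $V_1=\bC$ satisfies all three conditions. For the inductive step, assume the theorem holds in every dimension strictly less than $n$. First dispose of the trivial case: if every $T\in \A$ already satisfies either $T=zI$ for some $z\in\ol{\bD}$ or $\sigma(T)\subset \Delta(T)$, then $V_1=\bC^n$ suffices. Otherwise, pick a fixed $T_0\in\A$ for which both possibilities fail, and apply Lemma \ref{L:decomppb=1} to $T_0$ to obtain subspaces $W_1,\ldots,W_t$. Since $\A$ is commuting we have $\A\subset \{T_0\}'$, so each $W_i$ is automatically invariant for all of $\A$. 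A short inspection of the proof of Lemma \ref{L:decomppb=1} shows that our choice of $T_0$ forces $t\geq 2$, and hence $\dim W_i < n$ for every $i$.

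I would then apply the inductive hypothesis to the commuting family $\A|_{W_i}=\{S|_{W_i}:S\in\A\}$ acting on $W_i$, for each $1\leq i\leq t$; this family still has spectra in $\ol{\bD}$ since $\sigma(S|_{W_i})\subset \sigma(S)$. This produces non-zero subspaces $V^{(i)}_1,\ldots,V^{(i)}_{s_i}\subset W_i$ enjoying (a), (b), (c) relative to $\A|_{W_i}$. The candidate decomposition of $\bC^n$ is the full list $\{V^{(i)}_j\}_{i,j}$, and it remains to verify (a), (b), (c) for it.

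Invariance (a) is immediate. For (b), any linear relation $\sum_{i,j}v^{(i)}_j=0$ with $v^{(i)}_j\in V^{(i)}_j$ can be grouped as $\sum_i w_i=0$ with $w_i=\sum_j v^{(i)}_j\in W_i$; the independence of the $W_i$'s (from Lemma \ref{L:decomppb=1}) forces each $w_i=0$, whereupon independence within each $W_i$ (from the inductive hypothesis) forces each $v^{(i)}_j=0$. Combined with $\sum_{i,j}V^{(i)}_j=\sum_i W_i=\bC^n$, this yields (b). For (c), given $S\in\A$ the inductive hypothesis yields either $S|_{V^{(i)}_j}=zI$ (in which case we are done) or $\sigma(S|_{V^{(i)}_j})\subset \Delta(S|_{W_i})$. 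The crucial observation is that $\Delta(S|_{W_i})\subset \Delta(S)$ whenever $W_i$ is $S$-invariant: this follows at once from the characterization $\lambda\in \Delta(T)\iff \ker(T-\lambda I)\subsetneq \ker(T-\lambda I)^2$ already noted in the preliminaries.

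The main obstacle is essentially bookkeeping across the two nested layers of decomposition. One must combine independence of the outer pieces $W_i$ with independence of the inner pieces $V^{(i)}_j$ to produce a single family satisfying (b), and one must carefully transfer Jordan-block information from $S|_{W_i}$ back to $S$ so that the weaker conclusion $\sigma(T|_{V_i})\subset \Delta(T)$ of the theorem (as opposed to the equality $\sigma(T|_{V_i})=\Delta(T)$ of Lemma \ref{L:decomppb=1}) is enough to close the induction. Neither issue is deep, but both need to be articulated precisely.
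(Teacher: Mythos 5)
Your proof is correct, and it reaches the conclusion by a genuinely different organization of the same basic engine. Both arguments ultimately rest on two facts: Lemma \ref{L:decomppb=1} applied to a single matrix as the splitting device, and the containment $\Delta(S|_W)\subset\Delta(S)$ for an $S$-invariant subspace $W$ (which, as you note, follows directly from the kernel characterization of $\Delta$, and which the paper instead extracts from Lemma \ref{L:Deltarest}). Where you differ is in how the iteration is driven: the paper fixes a decomposition $\V$ of maximal cardinality among all ``partial'' decompositions (the cardinality being bounded by $n$) and derives a contradiction by refining it with Lemma \ref{L:decomppb=1} whenever some $T\in\A$ violates (c); you instead run an explicit induction on $\dim$, splitting $\bC^n$ along one ``bad'' matrix $T_0$ into at least two pieces and recursing with the full family on each piece. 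Your route buys a cleaner termination argument (dimension strictly drops, since $t\geq 2$ already follows from the \emph{statement} of Lemma \ref{L:decomppb=1} together with your choice of $T_0$, so no inspection of its proof is needed) and a more algorithmic flavour; the cost is the mild bookkeeping you identify — re-identifying each $W_i$ with a coordinate space to invoke the hypothesis, and the two-layer independence check — all of which you handle correctly. The paper's maximality formulation avoids the nested indexing but carries the slightly redundant pairs $(\F,\V)$. Both arguments accommodate infinite families $\A$ without change.
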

\begin{proof}
Let $\S$ be the collection of pairs $(\F,\V)$ where $\F\subset \A$  is a subset and $\V$ is a collection of non-zero subspaces of $\bC^n$ with the following properties:
\begin{itemize}

\item every $V\in \V$ is invariant for $\A$;

\item we have $V\cap \left(\sum_{W\in \V, W\neq V} W\right)=\{0\}$ for every $V\in \V$ and $\bC^n=\sum_{V\in \V}V$;

\item  for each $1\leq i\leq s$ and each $T\in \F$, either $\sigma(T|_{V_i})\subset \Delta(T)$ or there is $z\in \ol{\bD}$ such that $T|_{V_i}=z I_{V_i}$.
\end{itemize}
Notice that $\S$ is non-empty by Lemma \ref{L:decomppb=1}. Moreover, the second property above forces the cardinality of $\V$ to be at most $n$. In particular, we may choose a pair $(\F,\V)\in \S$ with the property that $\V$ has maximal cardinality. Write $\V=\{V_1,\ldots,V_s\}$. We claim that the subspaces $V_1,\ldots,V_s$ have the desired properties. We see that (a) and (b) are automatically satisfied, so it suffices to establish (c). Let $T\in \A$. We must verify that for every $1\leq i\leq s$, either $\sigma(T|_{V_i})\subset \Delta(T)$ or there is $z\in \ol{\bD}$ such that $T|_{V_i}=z I_{V_i}$. 

Assume otherwise, so that there is a non-empty subset $\Lambda$ of indices $1\leq i\leq s$ with the property that the restriction $T|_{V_i}$ is not of the form $zI_{V_i}$ for some $z\in \ol{\bD}$, and $\sigma(T|_{V_i})$ is not contained in $\Delta(T)$.  Fix $i\in \Lambda$. We may apply Lemma \ref{L:decomppb=1} to find finitely many non-zero subspaces $W^{(i)}_1,\ldots,W^{(i)}_{q_i}\subset V_i$ with the following properties:
\begin{itemize}

\item  $W^{(i)}_j$ is invariant for $\{T|_{V_i}\}'$ for each $1\leq j\leq q_i$;

\item we have $W^{(i)}_{j_0}\cap \left( \sum_{j\neq j_0}W^{(i)}_j\right)=\{0\}$ for every $1\leq j_0\leq q_i$ and $V_i=\sum_{j=1}^{q_i} W^{(i)}_j$;
\item  for each $1\leq j\leq q_i$,  either $\sigma(T|_{W^{(i)}_j})= \Delta(T|_{V_i})$  or there is $z\in \ol{\bD}$ such that $T|_{W^{(i)}_j}=z I_{W^{(i)}_j}$.
\end{itemize}
We have that  $\Delta(T|_{V_i})\subset \Delta(T)$ by Lemma \ref{L:Deltarest}, so  the third property above forces $q_i\geq 2$. Observe now that $A|_{V_i}\in \{T|_{V_i}\}'$ if $A\in \A$. Therefore, for each $1\leq j\leq q_i$ the subspace $W_j^{(i)}$ is invariant for the family $\A$. In particular, this implies that $\sigma(A|_{W^{(i)}_j})\subset \sigma(A|_{V_i})$ for every $1\leq j\leq q_i$ and every $A\in \A$.  If we let $\G=\F\cup \{T\}$ and
\[
\W=\{V_i:1\leq i\leq s, i\notin \Lambda\}\cup \{W_j^{(i)}:i\in \Lambda, 1\leq j\leq q_i\}
\]
then we see $(\G,\W)\in \S$, which contradicts the maximality property of $(\F,\V)$ as $q_i\geq 2$ for every $i\in \Lambda$.
\end{proof}

\section{Joint similarity}\label{S:jointsim}

In this section, we prove our main results based on the spectral decomposition obtained in Theorem \ref{T:decompfamily}. We first deal with the case of finitely many matrices, starting from the following observation. Recall that we adopt the convention that the maximum of the empty set  is $0$.

\begin{lemma}\label{L:jointsimVi}
Let $\F\subset \bM_n$ be a finite subset of commuting matrices. Assume that there is a subset $\F'\subset \F$ with the following properties:
\begin{enumerate}

\item[\rm{(a)}] if $T\in \F'$, then there is $z\in \ol{\bD}$ such that $T=z I$;

\item[\rm{(b)}] if $T\in \F\setminus \F'$, then $\sigma(T)\subset \bD$.
\end{enumerate}
Then, there exists an invertible matrix $Y\in \bM_n$ with the property that $YT Y^{-1}$ is a contraction for every $T\in \F$. Moreover, we have
\[
\|Y\|=\|Y^{-1}\|\leq  \left( \frac{n^2 K}{1-r}\right)^{\frac{n-1}{2}}
\]
where
\[
K=\max_{T\in \F}\|T\| \qand r=\max_{T\in \F\setminus \F'}\max_{\lambda\in \sigma(T)} |\lambda|.
\]
\end{lemma}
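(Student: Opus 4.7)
The strategy is to reduce directly to Lemma~\ref{L:opendisc} applied to the subfamily $\F \setminus \F'$, using the rigidity of scalar matrices under similarity to take care of $\F'$ for free.

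The key observation I would exploit is that for any $T = zI \in \F'$ with $|z| \leq 1$ and any invertible $Y \in \bM_n$, we have $YTY^{-1} = zI$, which is already a contraction regardless of the choice of $Y$. Consequently, it suffices to find an invertible $Y$ that simultaneously contracts every element of $\F \setminus \F'$ while meeting the stated norm estimate; the elements of $\F'$ are then contracted at no cost.

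If $\F \setminus \F' = \emptyset$, I would simply take $Y = I_n$; under the convention that the maximum of the empty set is $0$ we have $r = 0$, and the bound is then trivially available. Otherwise, $\F \setminus \F'$ is a finite commuting family (being a subset of the commuting family $\F$), and by hypothesis (b) every one of its members has spectrum in the \emph{open} disc $\bD$. Finiteness forces the quantity $r$ in the statement to lie in $[0,1)$, so $\sigma(T) \subset \ol{\bD_r}$ for every $T \in \F \setminus \F'$; moreover $\|T\| \leq K$ by definition of $K$. Applying Lemma~\ref{L:opendisc} to this commuting subfamily with these parameters produces an invertible $Y$ for which $YTY^{-1}$ is a contraction for every $T \in \F \setminus \F'$ and such that $\|Y\| = \|Y^{-1}\| \leq (n^2 K/(1-r))^{(n-1)/2}$. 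Combined with the opening observation on $\F'$, this $Y$ fulfills the conclusion.

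No real obstacle is anticipated: the whole content is a clean packaging of Lemma~\ref{L:opendisc} together with the invariance of scalar matrices under similarity. The only mild book-keeping point is the hypothesis $K > 1$ in Lemma~\ref{L:opendisc}, which can be arranged by enlarging $K$ if necessary, or handled separately in the trivial regime where every element of $\F \setminus \F'$ is already a contraction.
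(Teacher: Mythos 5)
Your proof is correct and is essentially the paper's argument: the paper organizes it as an induction on the cardinality of $\F$, stripping off the scalar matrices in $\F'$ one at a time before invoking Lemma~\ref{L:opendisc} on the remaining family, whereas you observe directly that scalar matrices are unchanged by any similarity and apply Lemma~\ref{L:opendisc} to $\F\setminus\F'$ in a single step. Your remark about the hypothesis $K>1$ (and the empty-family case) is a point the paper glosses over, so your write-up is if anything slightly more careful, but the underlying route is the same.
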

\begin{proof}
We proceed by induction on the cardinality of $\F$. Assume first that $\F$ has one element, say $T$. If $T\in \F'$, there is nothing to prove. Assume thus that $\sigma(T)\subset \bD$. We may apply Lemma \ref{L:opendisc} to find an invertible matrix $Y\in \bM_n$ such that $YTY^{-1}$ is a contraction and
\[
\|Y\|=\|Y^{-1}\|\leq \left( \frac{n^2 K}{1-r}\right)^{\frac{n-1}{2}}.
\]
Assume that the conclusion holds whenever $\F$ has $m$ elements.  We claim that the conclusion holds when $\F$ has $m+1$ elements as well. To see this, note first that if $\F'=\varnothing$, then an application of Lemma \ref{L:opendisc} yields the existence of an invertible matrix $Y$ with the property that $Y T Y^{-1}$ is a contraction for every $T\in \F$ and
\[
\|Y\|= \|Y^{-1}\|\leq \left( \frac{n^2 K}{1-r}\right)^{\frac{n-1}{2}}.
\]
This case did not require the induction hypothesis. In the alternative situation, there is $T'\in \F'$ and $z\in \ol{\bD}$ such that $T'=zI $.  By the induction hypothesis, there is an invertible matrix $Y$ with the property that $YTY^{-1}$ is a contraction for every $T\in\F\setminus \{T'\}$ and
\[
\|Y\|=\|Y^{-1}\|\leq   \left( \frac{n^2 K}{1-r}\right)^{\frac{n-1}{2}}.
\]
Trivially, we see that
\[
YT' Y^{-1}=Y(z I)Y^{-1}=z I
\]
is also a contraction. In either case, we have found an invertible matrix $Y$ with the property that $Y TY^{-1}$ is a contraction for every $T\in \F$ and
\[
\|Y\|=\|Y\|\leq  \left( \frac{n^2 K}{1-r}\right)^{\frac{n-1}{2}},
\]
so the proof is complete by induction.
\end{proof}

We can now prove one of our main results.

\begin{theorem}\label{T:jointsimfinitepoly}
Let $\A \subset \bM_n$ be a commuting family of power bounded matrices  and let $\F\subset \A$ be a finite subset. Then, there exists an invertible matrix $Y\in \bM_n$ with the property that $YT Y^{-1}$ is a contraction for every $T\in \F$. Moreover, we have
\[
\|Y\|=\|Y^{-1}\|\leq \alpha \left( \frac{n^2 K}{1-r}\right)^{\frac{n-1}{2}}
\]
where $\alpha\geq 1$ is a constant depending only on $\A$,
\[
K=\max_{T\in \F}\|T\| \qand r=\max_{T\in \F}\max_{\lambda\in \Delta(T)}|\lambda|.
\]

\end{theorem}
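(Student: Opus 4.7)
The plan is to combine the spectral decomposition provided by Theorem~\ref{T:decompfamily} with Lemma~\ref{L:jointsimVi} applied on each summand, and then patch the resulting similarities together via a change of basis that accounts for the non-orthogonality of the decomposition. Since every $T\in\A$ satisfies $\sigma(T)\subset\ol{\bD}$ by Lemma~\ref{L:powerbounded}, Theorem~\ref{T:decompfamily} yields non-zero subspaces $V_1,\ldots,V_s\subset\bC^n$, depending only on $\A$, with the listed properties. Endow each $V_i$ with the inner product inherited from $\bC^n$ and split $\F|_{V_i}:=\{T|_{V_i}:T\in\F\}$ into the subfamily $\F_i'$ of those $T|_{V_i}$ equal to $zI_{V_i}$ for some $z\in\ol{\bD}$, and its complement (whose elements satisfy $\sigma(T|_{V_i})\subset\Delta(T)\subset\bD$ by property (c) and Lemma~\ref{L:powerbounded}). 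Lemma~\ref{L:jointsimVi} then produces an invertible $Y_i$ on $V_i$ making every $Y_iT|_{V_i}Y_i^{-1}$ a contraction, with
\[
\|Y_i\|=\|Y_i^{-1}\|\leq\left(\frac{n^2K}{1-r}\right)^{(n-1)/2},
\]
using $\dim V_i\leq n$, $\|T|_{V_i}\|\leq\|T\|\leq K$, and the fact that for $T|_{V_i}\notin\F_i'$ the eigenvalues of $T|_{V_i}$ lie in $\Delta(T)$ and hence have modulus at most $r$.

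To assemble these into a single similarity on $\bC^n$, let $H=V_1\oplus\cdots\oplus V_s$ denote the orthogonal direct sum of Hilbert spaces, let $X:\bC^n\to H$ be the linear isomorphism $v_1+\cdots+v_s\mapsto v_1\oplus\cdots\oplus v_s$ afforded by property (b), and fix a unitary identification $U:H\to\bC^n$. Since each $V_i$ is $\A$-invariant, $XTX^{-1}=T|_{V_1}\oplus\cdots\oplus T|_{V_s}$ for every $T\in\F$. Setting $Y_0=U(Y_1\oplus\cdots\oplus Y_s)X$ therefore gives
\[
Y_0TY_0^{-1}=U\bigl((Y_1T|_{V_1}Y_1^{-1})\oplus\cdots\oplus(Y_sT|_{V_s}Y_s^{-1})\bigr)U^{-1},
\]
which is unitarily equivalent to a direct sum of contractions on $H$ and hence a contraction on $\bC^n$. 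Rescaling by a positive scalar so that $\|Y\|=\|Y^{-1}\|$ (set $Y=(\|Y_0^{-1}\|/\|Y_0\|)^{1/2}Y_0$) preserves the conjugation and yields
\[
\|Y\|=\|Y^{-1}\|\leq\sqrt{\|X\|\,\|X^{-1}\|}\,\max_{1\leq i\leq s}\|Y_i\|\leq\alpha\left(\frac{n^2K}{1-r}\right)^{(n-1)/2},
\]
where $\alpha=\sqrt{\|X\|\,\|X^{-1}\|}$ is determined by the subspaces $V_1,\ldots,V_s$ and hence depends only on $\A$.

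The principal obstacle is the non-orthogonality of the decomposition $\bC^n=V_1+\cdots+V_s$: a naive block-diagonal patching of the $Y_i$'s does not yield a contraction on $\bC^n$, because contractivity on each $V_i$ separately does not translate into contractivity on their (non-orthogonal) sum. Conjugating through $X$ and the unitary identification $U$ circumvents this at the cost of the extra factor $\sqrt{\|X\|\,\|X^{-1}\|}$; crucially, since the subspaces $V_i$ arise from Theorem~\ref{T:decompfamily} applied to the full family $\A$, this factor is independent of the particular finite subfamily $\F$, which is precisely what allows it to be absorbed into the constant $\alpha$ in the stated bound.
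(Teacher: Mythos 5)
Your proposal is correct and follows essentially the same route as the paper: decompose $\bC^n$ via Theorem~\ref{T:decompfamily} applied to all of $\A$ (so the change-of-basis factor depends only on $\A$), apply Lemma~\ref{L:jointsimVi} on each $V_i$, conjugate by the map $X$ onto the direct sum, and rescale to balance $\|Y\|$ and $\|Y^{-1}\|$. The only cosmetic differences are your explicit unitary identification of $\oplus_i V_i$ with $\bC^n$ and your slightly sharper choice $\alpha=\sqrt{\|X\|\,\|X^{-1}\|}$ in place of the paper's $\max\{\|X\|,\|X^{-1}\|\}$.
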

\begin{proof}
Invoking Lemma \ref{L:powerbounded}, we see that every matrix in $\A$ has spectrum contained in $\ol{\bD}$. We may thus apply Theorem \ref{T:decompfamily} to find finitely many non-zero subspaces $V_1,\ldots,V_{s}\subset \bC^n$ with the following properties:
\begin{itemize}

\item  for each $1\leq i\leq s$, the subspace $V_i$ is invariant for $\A$;

\item we have $V_i\cap \left( \sum_{j\neq i}V_j\right)=\{0\}$ for every $1\leq i\leq s$ and $\bC^n=\sum_{i=1}^s V_i$;

\item for each $1\leq i\leq s$ and each $T\in \A$, either $\sigma(T|_{V_i})\subset \Delta(T)$  or there is $z\in \ol{\bD}$ such that $T|_{V_i}=z I_{V_i}$.

\end{itemize}
Define a linear map $X :\sum_{i=1}^{s}V_i\to \oplus_{i=1}^{s} V_i$ as 
\[
X\left( \sum_{i=1}^{s} v_i\right)=(v_1,v_2,\ldots,v_{s})
\]
for each $v_1\in V_1,\ldots, v_{s}\in V_{s}$. It is readily seen that $X$ is  a well-defined invertible operator. Set
\[
\alpha=\max\{\|X\|,\|X^{-1}\|\}.
\]
For each $T\in \A$, we note that
\[
X T X^{-1}=T|_{V_1}\oplus T|_{V_2}\oplus \ldots \oplus T|_{V_{s}}.
\]
Moreover, another application of Lemma \ref{L:powerbounded} reveals that $\Delta(T)\subset \bD$ for every $T\in \A$. Now, for each $1\leq i\leq s$ we observe that $\{T|_{V_i}:T\in \F\}$ is a finite set of commuting matrices.  Thus, for $1\leq i\leq s$ we may apply Lemma \ref{L:jointsimVi} to find an invertible matrix $Z_i$ with the property that $Z_i (T|_{V_i})Z_i^{-1}$ is a contraction for every $T\in \F$ and
\[
\|Z_i\|=\|Z_i^{-1}\|\leq  \left( \frac{n^2 K}{1-r}\right)^{\frac{n-1}{2}}.
\]
Define 
\[
Z=(Z_1\oplus \ldots\oplus Z_s)X.
\]
We notice that $ZT Z^{-1}$ is a contraction for every $T\in \F$, so we are done upon setting
\[
Y=\left( \frac{\|Z^{-1}\|}{\|Z\|}\right)^{1/2}Z.
\]
\end{proof}

We remark that the commutativity assumption in Theorem \ref{T:jointsimfinitepoly} cannot be dispensed with.

\begin{example}\label{E:nc}
Consider
\[
T=\begin{bmatrix}
0 & 2\\
0 & 0
\end{bmatrix}.
\]
Then, $T^2=0$ so $T$ and $T^*$ are power bounded with constant $2$. Note that
\[
T^*T=\begin{bmatrix}
0 & 0\\
0 & 4
\end{bmatrix}, \quad
TT^*=\begin{bmatrix}
4 & 0\\
0 & 0
\end{bmatrix}
\]
hence $T$ and $T^*$ do not commute. We claim that there is no invertible matrix $X\in \bM_2$ such that $XTX^{-1}$ and $XT^*X^{-1}$ are contractions. For using that
\[
TT^*=X^{-1}(XTX^{-1})(XT^*X^{-1})X,
\]
we would then find
\begin{align*}
4^p&=\|(TT^*)^p\|\\
&\leq \|X^{-1}\| \|X\| \|(XTX^{-1}) (XT^*X^{-1})\|^p\\
&\leq \|X^{-1}\| \|X\| \|XTX^{-1}\|^p \| XT^*X^{-1}\|^p\\
&\leq \|X^{-1}\|\|X\|
\end{align*}
for every $p\in \bN$, which is absurd.
\qed
\end{example}

Next, we wish to take advantage of the precise estimate from Theorem \ref{T:jointsimfinitepoly} to extend the statement to commuting families of arbitrary cardinality.  The following elementary example shows that some care must be taken in trying to achieve this goal.

\begin{example}\label{E:infinite}
For each $k\in \bN$, let
\[
T_k=\begin{bmatrix}
0 & k\\
0 & 0
\end{bmatrix}\in \bM_2.
\]
We note that $T_k^2=0$ and thus $T_k$ is power bounded for every $k\in \bN$. Moreover, the family $\{T_k:k\in \bN\}\subset \bM_2$ is clearly commuting. Nevertheless, there is no invertible matrix $X\in \bM_2$ such that $XT_k X^{-1}$ is a contraction for every $k\in \bN$. Indeed, this would force
\[
\|T_k\|\leq \|X\| \|X^{-1}\| \|XT_k X^{-1}\|\leq \|X\| \|X^{-1}\|
\]
for every $k\in \bN$, which is absurd since $T_k$ is easily seen to have norm $k$.
\qed
\end{example}

To circumvent this problem, we will consider families of matrices that are uniformly power bounded. In addition, we will require the families to enjoy another type of uniformity. A subset $\A\subset \bM_n$ will be said to have the \emph{uniform Jordan property} if 
\[
\inf_{T\in \A}\min_{\lambda\in \Delta(T)}\|(T-\lambda I)|_{\ker (T-\lambda I)^2}\|>0.
\]
Using the notation introduced before Lemma \ref{L:powerbounded}, we see that $\A$ has the uniform Jordan property if and only if $\inf_{T\in \A}\delta(T)>0$. 

Before proceeding, we wish to exhibit a condition that is sufficient for a collection $\A\subset \bM_n$ to have the uniform Jordan property. Let $T\in \A$ and let $\lambda\in \Delta(T)$. Let $X\in \bM_n$ be an invertible matrix such that $XTX^{-1}=J$, where $J$ is the Jordan canonical form of $T$.  Let $Y=X|_{\ker(T-\lambda I)^2}$. Then, $Y$ is an invertible operator from $\ker (T-\lambda I)^2$ onto $\ker (J-\lambda I)^2$, and $Y^{-1}=X^{-1}|_{\ker (J-\lambda I)^2}$. Hence, we find $\|Y\|\leq \|X\|$ and $\|Y^{-1}\|\leq \|X^{-1}\|$, while
\[
Y (T-\lambda I)|_{\ker(T-\lambda I)^2}Y^{-1}=(J-\lambda I)|_{\ker(J-\lambda I)^2}.
\]
On the other hand, it is easily verified that 
\[
\|(J-\lambda I)|_{\ker(J-\lambda I)^2}\|=1
\]
and therefore
\[
\|(T-\lambda I)|_{\ker(T-\lambda I)^2}\|\geq \frac{1}{\|Y\| \|Y^{-1}\|}\geq \frac{1}{\|X\| \|X^{-1}\|}.
\]
We conclude that $\A$ has the uniform Jordan property if for every $T\in \A$ there is an invertible matrix $X_T\in \bM_n$ with the property that $X_T T X_T^{-1}$ is in Jordan canonical form and such that the quantity 
\[
\sup_{T\in \A} \{\|X_T\|\|X_T^{-1}\|\}
\]
is finite. This explains our choice of terminology.

Next, we unravel the spectral information contained in the uniform Jordan property that is relevant for our purposes.

\begin{lemma}\label{L:unifDelta}
Let $\A\subset \bM_n$ be a uniformly power bounded family of matrices with the uniform Jordan property. Then,
\[
\sup_{T\in \A}\max_{\lambda\in \Delta(T)}|\lambda|<1.
\]
\end{lemma}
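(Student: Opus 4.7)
The plan is to combine the uniform Jordan property with the quantitative estimate provided by Lemma \ref{L:powerbounded}. Let $K>0$ be a uniform power-boundedness constant for $\A$, and set $\delta=\inf_{T\in\A}\delta(T)$, which is strictly positive by the uniform Jordan property.

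Applying Lemma \ref{L:powerbounded} to an arbitrary $T\in\A$, I get
\[
p|\lambda|^{p-1}\leq \frac{K}{\delta(T)}\leq \frac{K}{\delta}
\]
for every $p\in\bN$ and every $\lambda\in\Delta(T)$. Crucially, the right-hand side is independent of $T$ and $\lambda$, so this is a truly uniform estimate over all of $\A$.

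Now the key trick is to exploit a single well-chosen value of $p$: pick any integer $p_0$ strictly greater than $K/\delta$. Then the inequality above gives
\[
|\lambda|^{p_0-1}\leq \frac{K}{\delta p_0}<1
\]
for every $T\in\A$ and every $\lambda\in\Delta(T)$. Setting
\[
r_0=\left(\frac{K}{\delta p_0}\right)^{1/(p_0-1)}<1,
\]
I conclude that $|\lambda|\leq r_0$ for all such $\lambda$, which yields $\sup_{T\in\A}\max_{\lambda\in\Delta(T)}|\lambda|\leq r_0<1$, as desired.

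There is no real obstacle here: Lemma \ref{L:powerbounded} already packages the delicate analysis of the $(1,2)$-entry of powers of a $2\times 2$ Jordan-like block, and the uniform Jordan property is exactly what is needed so that the bound $K/\delta(T)$ can be replaced by a uniform constant. The only small point to be careful about is ensuring the chosen exponent $p_0$ genuinely extracts a strict inequality, which is why I take $p_0>K/\delta$ rather than $p_0\geq K/\delta$.
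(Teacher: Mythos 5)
Your proof is correct and follows essentially the same route as the paper: both arguments feed the uniform Jordan property into the estimate $p|\lambda|^{p-1}\leq K/\delta(T)$ from Lemma \ref{L:powerbounded} to get a bound uniform over $\A$, and then specialize to a single large exponent to force $|\lambda|$ away from $1$. Your direct extraction of an explicit $r_0$ is a bit cleaner than the paper's contradiction argument with $N\geq 4\theta K$ and $\eps$, but the underlying idea is identical (and note that $p_0\geq 2$ is automatic, since taking $p=1$ in the lemma already gives $K/\delta\geq 1$ whenever some $\Delta(T)$ is non-empty).
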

\begin{proof}
Choose $K>0$ such that every $T\in \A$ is power bounded with constant $K$. By virtue of the uniform Jordan property of $\A$, the quantity
\[
\theta=\sup_{T\in \A}\delta(T)^{-1}
\]
is finite. Hence, by Lemma \ref{L:powerbounded} we find
\[
\sup_{T\in \A}\max_{\lambda\in \Delta(T)}\sup_{p\in \bN} \{p|\lambda|^{p-1}\}\leq  \sup_{T\in \A}K\delta(T)^{-1}=K\theta.
\]
Choose $N\in \bN$ such that $N\geq 4\theta K$, along with $\eps>0$ so small that 
\[
(1-\eps)^{N-1}\geq 1/2.
\]
Assume now towards a contradiction that
\[
\sup_{T\in \A}\max_{\lambda\in \Delta(T)}|\lambda|\geq 1.
\]
We may thus find $S\in \A$ and $\mu\in \Delta(S)$ with the property that $|\mu|\geq 1-\eps$. Thus,
\[
N|\mu|^{N-1}\geq N(1-\eps)^{N-1}\geq N/2\geq 2\theta K
\]
which is absurd.
\end{proof}

Finally, we obtain a generalization of Theorem \ref{T:jointsimfinitepoly} which holds for possibly infinite families.

\begin{theorem}\label{T:jointsiminfinite}
Let $\A\subset \bM_n$ be a uniformly power bounded commuting family of matrices  with the uniform Jordan property. Then, there exists an invertible matrix $Y\in \bM_n$ with the property that $YT Y^{-1}$ is a contraction for every $T\in \A$. 
\end{theorem}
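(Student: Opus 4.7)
The plan is to reduce the statement to the finite case via a compactness argument, using the fact that Theorem \ref{T:jointsimfinitepoly} provides an \emph{explicit} bound on $\|Y\|$ and $\|Y^{-1}\|$ that remains uniform in the finite subfamily chosen, provided $\A$ satisfies the two uniformity hypotheses.

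First I would establish that the similarities produced by Theorem \ref{T:jointsimfinitepoly} applied to finite subfamilies of $\A$ all lie in a fixed compact set. Pick $K_0 > 0$ such that $\|T\|\leq K_0$ for every $T\in\A$ (using uniform power boundedness). By Lemma \ref{L:unifDelta}, the uniform Jordan property together with uniform power boundedness yields
\[
r_0 := \sup_{T\in\A}\max_{\lambda\in\Delta(T)}|\lambda| < 1.
\]
Let $\alpha\geq 1$ be the constant furnished by Theorem \ref{T:jointsimfinitepoly}, which depends only on $\A$ (being extracted from the spectral decomposition of Theorem \ref{T:decompfamily} applied to the full family). Set
\[
M = \alpha\left(\frac{n^2 K_0}{1-r_0}\right)^{(n-1)/2}.
\]
Then for every finite subset $\F\subset\A$, Theorem \ref{T:jointsimfinitepoly} produces an invertible $Y_\F\in\bM_n$ with $\|Y_\F\|=\|Y_\F^{-1}\|\leq M$ such that $Y_\F T Y_\F^{-1}$ is a contraction for all $T\in\F$.

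Next I would set up the compactness step. Consider
\[
\B_M = \{\,Y\in\bM_n : Y\text{ invertible},\ \|Y\|\leq M,\ \|Y^{-1}\|\leq M\,\}.
\]
This set is compact in $\bM_n$: it is bounded, and if $Y_k\to Y$ with $Y_k\in\B_M$, then a subsequential limit $Z$ of $(Y_k^{-1})$ satisfies $YZ=I$, so $Y$ is invertible with $Y^{-1}=Z\in\B_M$. Moreover inversion is Lipschitz on $\B_M$ since $\|Y^{-1}-Y_k^{-1}\|\leq M^2\|Y-Y_k\|$, so for each fixed $T$ the map $Y\mapsto Y T Y^{-1}$ is continuous on $\B_M$. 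For each finite $\F\subset\A$, define
\[
\K_\F = \{\,Y\in\B_M : \|Y T Y^{-1}\|\leq 1 \text{ for every } T\in\F\,\}.
\]
Each $\K_\F$ is closed in the compact set $\B_M$, hence compact, and non-empty by the previous paragraph. Since $\K_{\F_1\cup\F_2}\subset\K_{\F_1}\cap\K_{\F_2}$, the family $\{\K_\F\}_\F$ is downward directed and has the finite intersection property. By compactness, $\bigcap_\F \K_\F$ is non-empty, and any $Y$ in this intersection satisfies $\|Y T Y^{-1}\|\leq 1$ for every $T\in\A$, as required.

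I do not anticipate a serious obstacle: the entire quantitative backbone is already in place in Theorem \ref{T:jointsimfinitepoly} and Lemma \ref{L:unifDelta}. The one point that warrants verification is that the constant $\alpha$ in Theorem \ref{T:jointsimfinitepoly} really is attached to $\A$ itself rather than to the finite $\F$, which is exactly how that theorem is stated (since the spectral decomposition of $\bC^n$ is performed with respect to $\A$). Once this is noted, the bound $M$ is uniform in $\F$ and the Cantor-style intersection argument delivers the desired joint similarity.
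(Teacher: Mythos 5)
Your proof is correct and follows essentially the same route as the paper: both arguments combine the uniform bound from Theorem \ref{T:jointsimfinitepoly} (with $K$ and $r$ controlled via uniform power boundedness and Lemma \ref{L:unifDelta}) with compactness of bounded sets of invertible matrices in $\bM_n$. The only difference is cosmetic --- you use the finite intersection property of the compact sets $\K_\F$, while the paper extracts a norm-convergent subnet of $(Y_\F)_\F$; both are the same compactness argument in different clothing.
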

\begin{proof}
Choose $K>0$ such that every $T\in \A$ is power bounded with constant $K$. We note that by Lemma \ref{L:unifDelta} there is $0<r<1$ with the property that
\[
\max_{\lambda\in \Delta(T)}|\lambda|\leq r
\]
for every $T\in \A$. By Theorem \ref{T:jointsimfinitepoly}, there is a constant $\alpha\geq 1$ depending only on $\A$ such that for every finite subset $\F\subset \A$,  there is an invertible matrix $Y_\F\in \bM_n$ with the property that $Y_\F TY_\F^{-1}$ is a contraction for every $T\in\F$, and
\[
\|Y_\F\|=\|Y_\F^{-1}\|\leq \alpha \left( \frac{n^2 K}{1-r}\right)^{\frac{n-1}{2}}.
\]
Since closed balls of $\bM_n$ are compact in the norm topology, there is a subnet $(Y_{\F_\beta})_{\beta \in B}$ of $(Y_\F)_{\F\subset \A}$ which converges in norm to some invertible matrix $Y\in \bM_n$ with 
\[
\|Y\|=\|Y^{-1}\|\leq \alpha \left( \frac{n^2 K}{1-r}\right)^{\frac{n-1}{2}}.
\]
Let $T\in \A$. Then, there is $\beta_0\in B$ such that $T\in \F_\beta$ for every $\beta\geq \beta_0$. We conclude that $\|Y_{\F_\beta}TY_{\F_\beta}^{-1}\|\leq 1$ for $\beta\geq \beta_0$, whence
\[
\|Y TY^{-1}\|=\lim_{\beta\in B}\|Y_{\F_\beta}TY^{-1}_{\F_\beta}\|\leq 1
\]
and the proof is complete.
\end{proof}

At the time of this writing, it is unclear to us whether or not the uniform Jordan property may be removed from the assumptions of Theorem \ref{T:jointsiminfinite}. 

%\bibliography{/Users/raphaelclouatre/Dropbox/Research/Bibliography/biblio_main}
\bibliography{/Users/Raphael/Dropbox/Research/Bibliography/biblio_main}
\bibliographystyle{plain}

%\bibliography{biblio_main_cbshilov}

\end{document}